\documentclass[10pt,reqno]{amsart}

\usepackage{amsmath,amssymb,amsfonts,mathrsfs,bm,graphicx,latexsym,verbatim,multicol,lscape,enumitem,makecell,bbm,tabu}

\newtheorem{thm}{Theorem}[section]

\newtheorem{cor}[thm]{Corollary}
\newtheorem{lem}[thm]{Lemma}
\newtheorem{prop}[thm]{Proposition}

\theoremstyle{definition}

\theoremstyle{remark}

\numberwithin{equation}{section}

\newcommand{\rmv}[1]{}

\def\<{\left\langle}
\def\>{\right\rangle}

\begin{document}

\title[On the $2$-adic valuation of $\sigma_k(n)$]{On the $2$-adic valuation of $\sigma_k(n)$}

\author{Kaimin Cheng}
\address{School of Mathematical Sciences, China West Normal University, Nanchong 637002, P. R. China}
\email{ckm20@126.com}

\author{Ke Zhang}
\address{School of Mathematical Sciences, China West Normal University, Nanchong 637002, P. R. China}
\email{2745808109@qq.com}

\subjclass[2020]{Primary 11A25, 11D61}
\keywords{divisor function, $2$-adic valuation, upper bound}

\date{}

\begin{abstract}
For a positive integer $k$, let
\[
\sigma_k(n)=\sum_{d\mid n} d^k
\]
be the divisor function of order $k$, and let $\nu_p(m)$ denote the $p$-adic valuation of an integer $m$. Motivated by recent work on the $p$-adic valuation of $\sigma_k(n)$, we study $\nu_2(\sigma_k(n))$ in detail. We prove that, for every integer $n\ge 2$,
\[
\nu_2(\sigma_k(n)) \le
\begin{cases}
\lceil \log_2 n \rceil, & \text{if $k$ is odd},\\[1mm]
\lfloor \log_2 n \rfloor, & \text{if $k$ is even}.
\end{cases}
\]
These bounds are best possible. More precisely, if $k$ is odd, then equality holds if and only if $n$ is a product of distinct Mersenne primes; if $k$ is even, then equality holds if and only if $n=3$. We also obtain an explicit formula for $\nu_2(\sigma_k(n))$ in terms of the prime factorization of $n$.
\end{abstract}

\maketitle

\section{Introduction}

For a positive integer $n$, the classical sum-of-divisors function
\[
\sigma(n)=\sum_{d\mid n} d
\]
is one of the most fundamental arithmetic functions in number theory. It appears naturally in many problems, ranging from multiplicative number theory to the theory of perfect numbers. A celebrated theorem of Robin \cite{[Rob84]} states that the Riemann hypothesis is equivalent to the inequality
\[
\sigma(n)<e^\gamma n\log\log n \qquad (n>5041),
\]
where $\gamma$ denotes Euler's constant.

Another classical topic related to $\sigma(n)$ is the theory of perfect numbers. A positive integer $n$ is called \emph{perfect} if $\sigma(n)=2n$. Euler proved that an even integer $n$ is perfect if and only if
\[
n=2^m(2^{m+1}-1)
\]
for some positive integer $m$ such that $2^{m+1}-1$ is prime, that is, a Mersenne prime. Whether there exist infinitely many even perfect numbers remains open, as does the existence of odd perfect numbers. For the latter problem, Ochem and Rao \cite{[Och12]} proved that an odd perfect number must exceed $10^{1500}$, while Nielsen \cite{[Nie15]} showed that an odd perfect number must have at least $10$ distinct prime factors.

Let $p$ be a prime. For a nonzero integer $m$, write $\nu_p(m)$ for the $p$-adic valuation of $m$, namely the largest integer $\alpha$ such that $p^\alpha\mid m$. In recent years, increasing attention has been paid to the $p$-adic valuation of divisor sums. In particular, Amdeberhan, Moll, Sharma, and Villamizar \cite{[Amd21]} studied $\nu_p(\sigma(n))$ and proved that
\[
\nu_2(\sigma(n))\le \lceil \log_2 n\rceil
\]
for every integer $n\ge 2$, with equality if and only if $n$ is a product of distinct Mersenne primes. For odd primes $p$, they also obtained conditional upper bounds for $\nu_p(\sigma(n))$. These conditions were later removed by Zhao and Chen \cite{[Zha25]}, who proved that
\begin{equation}\label{c1-1}
\nu_p(\sigma(n))\le \lceil \log_p n\rceil
\end{equation}
for every odd prime $p$ and every integer $n\ge 2$. They also determined all integers $n\ge 2$ satisfying equality in \eqref{c1-1} for every odd prime $p<10^5$. Related investigations for other arithmetic functions may be found, for instance, in \cite{[Hon12],Qiu19}.

For a positive integer $k$, define the divisor function of order $k$ by
\[
\sigma_k(n)=\sum_{d\mid n} d^k.
\]
Very recently, Zhao \cite{[Zha26]} proved that
\[
\nu_p(\sigma_k(n))\le \lceil k\log_p n\rceil
\]
for every integer $n\ge 2$, every prime $p$, and every integer $k\ge 1$.

The purpose of this paper is to sharpen Zhao's bound in the case $p=2$. Our main result is the following.

\begin{thm}\label{thm1.1}
Let \(k\ge 1\) and \(n\ge 2\). Write
\[
n=2^a\prod_{i=1}^r p_i^{\alpha_i},
\]
where \(p_1,\dots,p_r\) are distinct odd primes. Then
\[
\nu_2(\sigma_k(n))
=
\sum_{\substack{1\le i\le r\\ \alpha_i\ \mathrm{odd}}}
\bigl(\nu_2(\alpha_i+1)+\nu_2(p_i^k+1)-1\bigr).
\]
In particular:
\begin{enumerate}[label=(\alph*), leftmargin=2.5em]
    \item If \(k\) is odd, then
    \[
    \nu_2(\sigma_k(n))
    =
    \sum_{\substack{1\le i\le r\\ \alpha_i\ \mathrm{odd}}}
    \bigl(\nu_2(\alpha_i+1)+\nu_2(p_i+1)-1\bigr),
    \]
    and
    \[
    \nu_2(\sigma_k(n))\le \lceil \log_2 n\rceil.
    \]
    Equality holds if and only if \(n\) is a product of distinct Mersenne primes.

    \item If \(k\) is even, then
    \[
    \nu_2(\sigma_k(n))
    =
    \sum_{\substack{1\le i\le r\\ \alpha_i\ \mathrm{odd}}}
    \nu_2(\alpha_i+1),
    \]
    and
    \[
    \nu_2(\sigma_k(n))\le \lfloor \log_2 n\rfloor.
    \]
    Equality holds if and only if \(n=3\).
\end{enumerate}
\end{thm}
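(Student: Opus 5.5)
The plan is to derive the explicit formula from multiplicativity and the lifting-the-exponent lemma at $p=2$. Parts (a) and (b) then follow from elementary estimates, reducing to the known case $k=1$ where possible.

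\emph{Step 1 (formula).} Since $\sigma_k$ is multiplicative, $\nu_2(\sigma_k(n))=\nu_2(\sigma_k(2^a))+\sum_i \nu_2(\sigma_k(p_i^{\alpha_i}))$. The factor $\sigma_k(2^a)=1+2^k+\dots+2^{ak}$ is odd. For an odd prime $p$, $\sigma_k(p^\alpha)$ is a sum of $\alpha+1$ odd terms, so it is odd when $\alpha$ is even. When $\alpha$ is odd, write $x=p^k$, so that
\[
\sigma_k(p^\alpha)=\frac{x^{\alpha+1}-1}{x-1}, \qquad \alpha+1 \text{ even}.
\]
The $2$-adic LTE lemma for even exponents gives
\[
\nu_2(x^{\alpha+1}-1)=\nu_2(x-1)+\nu_2(x+1)+\nu_2(\alpha+1)-1 .
\]
Subtracting $\nu_2(x-1)$ yields $\nu_2(\alpha_i+1)+\nu_2(p_i^k+1)-1$, which is the general formula.

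\emph{Step 2 (specializing $k$).} If $k$ is odd, then $p^k+1=(p+1)(p^{k-1}-p^{k-2}+\dots+1)$, and the second factor is a sum of $k$ odd terms, hence odd. So $\nu_2(p^k+1)=\nu_2(p+1)$. If $k$ is even, then $p^k\equiv 1\pmod 4$, so $\nu_2(p^k+1)=1$. This gives the two displayed formulas.

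\emph{Step 3 (part (a)).} By the formula of Step 2, $\nu_2(\sigma_k(n))=\nu_2(\sigma(n))$ for every odd $k$. Therefore both the bound $\lceil\log_2 n\rceil$ and the characterization of equality (products of distinct Mersenne primes) transfer verbatim from the theorem of Amdeberhan, Moll, Sharma and Villamizar \cite{[Amd21]}.

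\emph{Step 4 (part (b)).} Set $S=\sum_{\alpha_i\text{ odd}}\nu_2(\alpha_i+1)$. For each odd $\alpha$ and odd prime $p$ we have $2^{\nu_2(\alpha+1)}\le \alpha+1\le p^\alpha$, so the contribution of $p^\alpha$ is at most $\log_2 p^\alpha$. Summing gives $S\le\log_2 n$, and since $S$ is an integer, $S\le\lfloor\log_2 n\rfloor$.

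For equality, I will sharpen this per factor to $2^{\nu_2(\alpha+1)+1}\le p^\alpha$ whenever $(p,\alpha)\ne(3,1)$:
\begin{itemize}
\item if $\alpha=1$ and $p\ge5$, then $4\le p$;
\item if $\alpha\ge3$, then $2(\alpha+1)\le 3^\alpha\le p^\alpha$.
\end{itemize}
Such a factor therefore has a deficit of at least $1$, and hence $S\le\log_2 n-1<\lfloor\log_2 n\rfloor$. Otherwise every odd-exponent factor is $3^1$, so $S\le 1$. If $S=0$, then $S<\lfloor\log_2 n\rfloor$ because $n\ge2$. If $S=1$, then $n=3m$, and either $m=1$ (equality, $n=3$) or $m\ge2$, in which case $\lfloor\log_2 n\rfloor\ge\lfloor\log_2 6\rfloor=2>S$.

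The only delicate point is this equality analysis in (b): the crude bound must be tightened to a unit deficit per factor, with the single exception $3^1$. Everything else is routine once LTE is invoked.
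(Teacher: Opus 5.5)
Your proof is correct. Steps 1--3 follow the paper's argument: multiplicativity, LTE applied to $(p^{k(\alpha+1)}-1)/(p^k-1)$, the two parity specializations, and reducing (a) to the $k=1$ theorem of Amdeberhan--Moll--Sharma--Villamizar. Working modulo $4$ rather than $8$ for even $k$ is enough.

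Part (b) is where your route differs. The paper proceeds in three steps:
\begin{enumerate}
\item it first proves the intermediate bound $\nu_2(\sigma_k(n))\le\Omega(m)$ for the odd part $m$ (Lemma~\ref{lem3.3});
\item for the equality case it then shows separately that $n$ must be odd;
\item it finally proves $\lfloor\log_2 n\rfloor\ge\Omega(n)+1$ for odd $n>3$ (Lemma~\ref{lem3.4}), using $n\ge 3^{\Omega(n)}$, $\log_2 3>3/2$, and a case split on $\Omega(n)$.
\end{enumerate}

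You instead compare each odd-exponent prime power with its own size, via $2^{\nu_2(\alpha+1)}\le\alpha+1\le p^\alpha$. You then note a unit of slack, $2^{\nu_2(\alpha+1)+1}\le p^\alpha$, in every case except $p^\alpha=3^1$. The power of $2$ and the even-exponent primes contribute nothing to $S$ but still enlarge $\log_2 n$, so they are absorbed automatically. This removes the need for both the reduction to odd $n$ and the $\Omega$-counting lemma. The whole equality analysis collapses to the single exceptional factor $3^1$, which can occur at most once since the primes are distinct, plus the check $\lfloor\log_2 6\rfloor=2$.

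The paper's route, in exchange, isolates the size-independent inequality $\nu_2(\sigma_k(n))\le\Omega(n)$ for even $k$ and odd $n$. That is a somewhat more informative statement in its own right, though it costs an extra lemma and a separate parity reduction to reach the equality case.
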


\section{Prime powers}

We begin with the multiplicativity of $\sigma_k$. If $\gcd(m,n)=1$, then
\[
\sigma_k(mn)=\sigma_k(m)\sigma_k(n).
\]
Indeed, every positive divisor $d$ of $mn$ can be written uniquely in the form $d=ab$ with $a\mid m$ and $b\mid n$. Hence
\[
\sigma_k(mn)
=\sum_{d\mid mn}d^k
=\sum_{\substack{a\mid m\\ b\mid n}}(ab)^k
=\left(\sum_{a\mid m}a^k\right)\left(\sum_{b\mid n}b^k\right)
=\sigma_k(m)\sigma_k(n).
\]

Therefore, if
\[
n=2^a\prod_{i=1}^r p_i^{\alpha_i},
\]
where the \(p_i\) are distinct odd primes and \(a,\alpha_i\ge 0\), then
\[
\sigma_k(n)=\sigma_k(2^a)\prod_{i=1}^r \sigma_k(p_i^{\alpha_i}),
\]
and so
\[
\nu_2(\sigma_k(n))
=
\nu_2(\sigma_k(2^a))
+\sum_{i=1}^r \nu_2(\sigma_k(p_i^{\alpha_i})).
\]
Thus the problem reduces to prime powers.

We first consider powers of $2$.

\begin{lem}\label{lem2.1}
For every \(a\ge 0\),
\[
\nu_2(\sigma_k(2^a))=0.
\]
\end{lem}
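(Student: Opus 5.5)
We expand $\sigma_k(2^a)$ directly from the definition and reduce it modulo $2$. The positive divisors of $2^a$ are exactly $1,2,\dots,2^a$, so
\[
\sigma_k(2^a)=\sum_{j=0}^{a}2^{jk}=1+\sum_{j=1}^{a}2^{jk}.
\]
We will show that this integer is odd, which is precisely the claim $\nu_2(\sigma_k(2^a))=0$.

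For $a=0$ the sum is just $1$, which is odd, so there is nothing to do. For $a\ge 1$, each term with $j\ge 1$ satisfies $jk\ge 1$, because $k\ge 1$. Hence every such term $2^{jk}$ is even. So the tail $\sum_{j=1}^{a}2^{jk}$ is even, and $\sigma_k(2^a)\equiv 1\pmod 2$. An odd integer has $2$-adic valuation $0$, which gives the lemma.

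One could also use the closed form $\sigma_k(2^a)=\bigl(2^{k(a+1)}-1\bigr)/(2^k-1)$, a quotient of two odd numbers, but the parity argument is cleaner. It also avoids any separate treatment of division. No step here is a real obstacle; the only point needing care is that $k\ge 1$ is used to guarantee $jk\ge 1$ for $j\ge 1$.
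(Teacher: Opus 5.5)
Your proposal is correct and is essentially the paper's argument: write $\sigma_k(2^a)=1+2^k+\cdots+2^{ak}$ and observe that every term except the leading $1$ is even, so the sum is odd. One cosmetic point: the divisors of $2^a$ should be listed as $1,2,2^2,\dots,2^a$ rather than $1,2,\dots,2^a$, though your displayed sum is right.
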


\begin{proof}
Since
\[
\sigma_k(2^a)=1+2^k+2^{2k}+\cdots+2^{ak},
\]
all terms except the first are even. Hence the sum is odd, and therefore
\[
\nu_2(\sigma_k(2^a))=0.
\]
\end{proof}

Thus the $2$-adic valuation of $\sigma_k(n)$ depends only on the odd part of $n$.

\begin{lem}\label{lem2.2}
If \(n=2^a m\) with \(m\) odd, then
\[
\nu_2(\sigma_k(n))=\nu_2(\sigma_k(m)).
\]
\end{lem}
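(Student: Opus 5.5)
The plan is to combine the multiplicativity of $\sigma_k$, established at the start of this section, with Lemma~\ref{lem2.1}. Write $n=2^a m$ with $m$ odd. Since $\gcd(2^a,m)=1$, multiplicativity gives
\[
\sigma_k(n)=\sigma_k(2^a)\,\sigma_k(m).
\]
Taking $2$-adic valuations, which are additive on products of nonzero integers, yields
\[
\nu_2(\sigma_k(n))=\nu_2(\sigma_k(2^a))+\nu_2(\sigma_k(m)).
\]

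By Lemma~\ref{lem2.1}, $\nu_2(\sigma_k(2^a))=0$ for every $a\ge 0$, which gives the claim. The case $a=0$ is trivial, since then $n=m$, and it is in any case consistent with Lemma~\ref{lem2.1} because $\sigma_k(1)=1$.

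There is no real obstacle here. The only point to check is that $\sigma_k(m)\neq 0$, so that $\nu_2(\sigma_k(m))$ is defined. This is immediate, since $\sigma_k(m)\ge 1$.
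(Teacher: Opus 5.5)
Your proposal is correct and follows the paper's own proof exactly: multiplicativity of $\sigma_k$ applied to $n=2^a m$ with $\gcd(2^a,m)=1$, additivity of $\nu_2$ on products, and Lemma~\ref{lem2.1} to eliminate the factor $\sigma_k(2^a)$. Your added remarks on the case $a=0$ and on $\sigma_k(m)\neq 0$ are harmless extra care.
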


\begin{proof}
This follows immediately from multiplicativity and Lemma~\ref{lem2.1}.
\end{proof}

The following standard valuation formula is a special case of the lifting-the-exponent lemma.

\begin{lem}\label{lem2.3}\cite[Proposition 1]{[Bey77]}
Let $p$ be a prime and let $A\ge 2$ be an integer. Then for every positive integer $m$:
\begin{enumerate}[label=(\alph*), leftmargin=2.5em]
\item If $p$ is odd and $p\mid (A-1)$, then
\[
\nu_p(A^m-1)=\nu_p(A-1)+\nu_p(m).
\]
\item If $p=2$ and $A$ is odd, then
\[
\nu_2(A^m-1)=
\begin{cases}
\nu_2(A-1), & \text{if $m$ is odd},\\[1mm]
\nu_2(A^2-1)+\nu_2(m)-1, & \text{if $m$ is even}.
\end{cases}
\]
\end{enumerate}
\end{lem}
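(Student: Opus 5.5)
Since Lemma~\ref{lem2.3} is quoted from \cite{[Bey77]}, I would only check it briefly with the usual factorization argument, separating the $p$-part of $m$ from the part prime to $p$. The basic identity throughout is
\[
A^m-1=(A-1)\bigl(1+A+\cdots+A^{m-1}\bigr).
\]

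\emph{Part (a).} First take $p\nmid m$. Since $A\equiv 1\pmod p$, the second factor is $\equiv m\not\equiv 0\pmod p$. Hence $\nu_p(A^m-1)=\nu_p(A-1)$.

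Next take $m=p$. Write $A=1+p^s u$ with $s=\nu_p(A-1)\ge 1$ and $p\nmid u$. Expanding binomially,
\[
\sum_{j=0}^{p-1}A^j\equiv p+p^s u\,\frac{p(p-1)}{2}\pmod{p^{2s}}.
\]
Because $p$ is odd, $(p-1)/2$ is an integer, so the second term is divisible by $p^{s+1}$, hence by $p^2$. Also $2s\ge 2$, so the whole sum is $\equiv p\pmod{p^2}$ and has $p$-adic valuation exactly $1$. Thus $\nu_p(A^p-1)=\nu_p(A-1)+1$, and $A^p$ again satisfies the hypothesis $p\mid A^p-1$.

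For general $m=p^t m'$ with $p\nmid m'$, I would iterate the case $m=p$ to get $\nu_p(A^{p^t}-1)=\nu_p(A-1)+t$. Applying the coprime case to the base $A^{p^t}$ then gives $\nu_p(A^m-1)=\nu_p(A-1)+\nu_p(m)$.

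\emph{Part (b).} If $m$ is odd, the second factor $1+A+\cdots+A^{m-1}$ is a sum of $m$ odd terms, so it is odd. Hence $\nu_2(A^m-1)=\nu_2(A-1)$.

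If $m=2^t m'$ with $t\ge 1$ and $m'$ odd, the odd case applied to the base $A^{2^t}$ gives $\nu_2(A^m-1)=\nu_2(A^{2^t}-1)$. Then I would use the telescoping factorization
\[
A^{2^t}-1=(A^2-1)\prod_{j=1}^{t-1}\bigl(A^{2^j}+1\bigr).
\]
For $j\ge 1$, $A^{2^j}$ is an odd square, so $A^{2^j}\equiv 1\pmod 8$. Hence $A^{2^j}+1\equiv 2\pmod 8$, and each of these $t-1$ factors contributes exactly one factor of $2$. Therefore $\nu_2(A^m-1)=\nu_2(A^2-1)+t-1=\nu_2(A^2-1)+\nu_2(m)-1$.

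The only delicate step is the exact valuation in the case $m=p$ of (a). This is where oddness of $p$ is used: it makes $p(p-1)/2$ divisible by $p$. For $p=2$ that fails, which is why (b) has the extra $-1$ and is stated in terms of $\nu_2(A^2-1)$. The hypothesis $A\ge 2$ only ensures that $A-1$ and $A^m-1$ are nonzero, so all valuations are finite.
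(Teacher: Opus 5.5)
Your argument is correct and complete. The paper does not prove Lemma~\ref{lem2.3}; it only imports it from \cite{[Bey77]}. So what you have written is a self-contained replacement for the citation, not an alternative to an argument in the text.

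Your route is the standard lifting-the-exponent proof, in three steps:
\begin{enumerate}
\item The coprime-exponent case, via $1+A+\cdots+A^{m-1}\equiv m \pmod p$ for odd $p$, and via a sum of $m$ odd terms for $p=2$.
\item Reduction of a general exponent $p^t m'$ to the pure power $p^t$, by applying the coprime case to the base $A^{p^t}$.
\item Either the binomial computation $\sum_{j<p}A^j\equiv p\pmod{p^2}$ for odd $p$, or the factorization $A^{2^t}-1=(A^2-1)\prod_{j=1}^{t-1}(A^{2^j}+1)$ with $A^{2^j}\equiv 1\pmod 8$ for $p=2$.
\end{enumerate}
Each step checks out, including the descent from modulus $p^{2s}$ to $p^2$ (which needs $s\ge 1$) and the degenerate case $t=1$ in (b), where the product is empty. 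You also correctly identify where oddness of $p$ enters.

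The citation buys the paper brevity; your proof buys self-containment. Note also that only part (b) is used later, in Theorem~\ref{thm2.4}, with $m=\alpha+1$ even and $\nu_2(A^2-1)=\nu_2(A-1)+\nu_2(A+1)$. Your proof of (b) is a few lines of elementary parity and mod-$8$ arithmetic. The paper could therefore drop both the external reference and part (a) at essentially no cost.
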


We now deal with odd prime powers.

\begin{thm}\label{thm2.4}
Let \(p\) be an odd prime and \(\alpha\ge 0\). Then
\[
\nu_2(\sigma_k(p^\alpha))
=
\begin{cases}
0, & \text{if }\alpha\text{ is even},\\[1mm]
\nu_2(\alpha+1)+\nu_2(p^k+1)-1, & \text{if }\alpha\text{ is odd}.
\end{cases}
\]
\end{thm}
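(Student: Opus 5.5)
We will compute $\sigma_k(p^\alpha)=1+p^k+p^{2k}+\cdots+p^{\alpha k}$ directly, setting $q=p^k$, which is odd. Then
\[
\sigma_k(p^\alpha)=1+q+\cdots+q^\alpha=\frac{q^{\alpha+1}-1}{q-1},
\]
and $q\ge 3$. The argument splits according to the parity of $\alpha$.

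If $\alpha$ is even, the sum has $\alpha+1$ terms, an odd number, and each term is odd. Hence the sum is odd and its $2$-adic valuation is $0$. No lifting-the-exponent argument is needed here.

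If $\alpha$ is odd, then $m=\alpha+1$ is even. We apply Lemma~\ref{lem2.3}(b) with $A=q$, which gives
\[
\nu_2(q^{\alpha+1}-1)=\nu_2(q^2-1)+\nu_2(\alpha+1)-1 .
\]
Subtracting $\nu_2(q-1)$ and using $q^2-1=(q-1)(q+1)$, we get
\[
\nu_2(\sigma_k(p^\alpha))=\nu_2(q^{\alpha+1}-1)-\nu_2(q-1)=\nu_2(q+1)+\nu_2(\alpha+1)-1,
\]
which is exactly $\nu_2(\alpha+1)+\nu_2(p^k+1)-1$.

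The only point needing care is the application of Lemma~\ref{lem2.3}, specifically that its hypotheses hold: $A=p^k\ge 2$ is odd and $m=\alpha+1$ is a positive even integer. Both hold, so the odd case goes through. We also need to handle $\alpha=0$: there $\sigma_k(1)=1$ has valuation $0$, and $\alpha=0$ falls in the even case, so it is consistent with the formula.
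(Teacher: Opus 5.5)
Your proof is correct and follows essentially the same route as the paper: a parity count of odd terms when $\alpha$ is even, and Lemma~\ref{lem2.3}(b) with $A=p^k$, $m=\alpha+1$ when $\alpha$ is odd, followed by subtracting $\nu_2(p^k-1)$ via $p^{2k}-1=(p^k-1)(p^k+1)$. Your explicit check of the lemma's hypotheses and of the case $\alpha=0$ is a small but welcome addition.
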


\begin{proof}
We have
\[
\sigma_k(p^\alpha)=1+p^k+p^{2k}+\cdots+p^{\alpha k}
=
\frac{p^{k(\alpha+1)}-1}{p^k-1}.
\]

If \(\alpha\) is even, then \(\alpha+1\) is odd. Since \(p\) is odd, each term \(p^{jk}\) is odd, and hence the sum of the \(\alpha+1\) terms is odd. Therefore
\[
\nu_2(\sigma_k(p^\alpha))=0.
\]

Now assume that \(\alpha\) is odd, so that \(\alpha+1\) is even. Put \(A=p^k\). Then \(A\) is odd and
\[
\sigma_k(p^\alpha)=\frac{A^{\alpha+1}-1}{A-1}.
\]
By Lemma~\ref{lem2.3},
\[
\nu_2(A^{\alpha+1}-1)
=
\nu_2(A-1)+\nu_2(A+1)+\nu_2(\alpha+1)-1.
\]
Subtracting \(\nu_2(A-1)\) from both sides yields
\[
\nu_2\!\left(\frac{A^{\alpha+1}-1}{A-1}\right)
=
\nu_2(A+1)+\nu_2(\alpha+1)-1.
\]
Since \(A=p^k\), this becomes
\[
\nu_2(\sigma_k(p^\alpha))
=
\nu_2(p^k+1)+\nu_2(\alpha+1)-1.
\]
This completes the proof.
\end{proof}

The parity of \(k\) leads to two especially simple formulas.

\begin{cor}\label{cor2.5}
Assume that \(k\) is odd. Then for every odd prime \(p\),
\[
\nu_2(p^k+1)=\nu_2(p+1).
\]
Consequently,
\[
\nu_2(\sigma_k(p^\alpha))
=
\begin{cases}
0, & \alpha\ \text{even},\\[1mm]
\nu_2(\alpha+1)+\nu_2(p+1)-1, & \alpha\ \text{odd}.
\end{cases}
\]
\end{cor}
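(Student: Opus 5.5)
The plan is to reduce everything to the identity $\nu_2(p^k+1)=\nu_2(p+1)$ for odd $k$. Once this holds, the second assertion follows by substituting it into the formula of Theorem~\ref{thm2.4}. For $\alpha$ even the value $0$ is unchanged. For $\alpha$ odd the term $\nu_2(p^k+1)$ is simply replaced by $\nu_2(p+1)$.

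To prove the identity, I would factor
\[
p^k+1=(p+1)\bigl(p^{k-1}-p^{k-2}+\cdots-p+1\bigr),
\]
which is valid because $k$ is odd. The second factor is an alternating sum of exactly $k$ terms $\pm p^j$ with $0\le j\le k-1$. Each of these terms is odd, since $p$ is odd. A sum of an odd number of odd integers is odd, so the second factor is odd. Hence
\[
\nu_2(p^k+1)=\nu_2(p+1)+0=\nu_2(p+1).
\]

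As a cross-check, one could instead use Lemma~\ref{lem2.3}(b) with $A=p$ and $m=2k$. This gives $\nu_2(p^{2k}-1)=\nu_2(p^2-1)+\nu_2(2k)-1=\nu_2(p^2-1)$. Part (b) with $m=k$ odd gives $\nu_2(p^k-1)=\nu_2(p-1)$. Subtracting the second from the first yields $\nu_2(p^k+1)=\nu_2(p^2-1)-\nu_2(p-1)=\nu_2(p+1)$.

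There is no real obstacle here. The only point needing care is the parity count of the terms in the cofactor, which is exactly where the hypothesis that $k$ is odd enters.
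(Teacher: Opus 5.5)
Your proof is correct and matches the paper's argument: the factorization $p^k+1=(p+1)(p^{k-1}-p^{k-2}+\cdots-p+1)$ has an odd cofactor, being a sum of $k$ odd terms, and the result is then substituted into Theorem~\ref{thm2.4}. Your cross-check via Lemma~\ref{lem2.3}(b) is also valid, though the paper does not need it.
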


\begin{proof}
Since \(k\) is odd,
\[
p^k+1=(p+1)(p^{k-1}-p^{k-2}+\cdots-p+1).
\]
The second factor is a sum of \(k\) odd integers, hence is itself odd. Therefore
\[
\nu_2(p^k+1)=\nu_2(p+1),
\]
and the desired formula follows from Theorem~\ref{thm2.4}.
\end{proof}

\begin{cor}\label{cor2.6}
Assume that \(k\) is even. Then for every odd prime \(p\),
\[
\nu_2(p^k+1)=1.
\]
Consequently,
\[
\nu_2(\sigma_k(p^\alpha))
=
\begin{cases}
0, & \alpha\ \text{even},\\[1mm]
\nu_2(\alpha+1), & \alpha\ \text{odd}.
\end{cases}
\]
\end{cor}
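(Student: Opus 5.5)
The plan is to reduce everything to a congruence modulo $4$ for $p^k+1$, and then substitute into Theorem~\ref{thm2.4}. Since $k$ is even, write $k=2j$ with $j\ge 1$, so that $p^k=(p^j)^2$ is the square of an odd integer. Every odd square is congruent to $1$ modulo $8$, since $(2t+1)^2=4t(t+1)+1$ and $t(t+1)$ is even. In particular $p^k\equiv 1\pmod 4$, hence
\[
p^k+1\equiv 2\pmod 4 ,
\]
which gives $\nu_2(p^k+1)=1$. This is the only place where the parity of $k$ enters.

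With this in hand, the second assertion follows by substituting into the formula of Theorem~\ref{thm2.4}. For $\alpha$ even, the value is $0$ directly. For $\alpha$ odd, it becomes
\[
\nu_2(\alpha+1)+1-1=\nu_2(\alpha+1).
\]

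I do not expect any real obstacle here. The only point requiring care is to use the squaring observation rather than trying to factor $p^k+1$ as in Corollary~\ref{cor2.5}, because for even $k$ no such factorization by $p+1$ is available.
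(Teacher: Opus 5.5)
Your proposal is correct and follows essentially the same route as the paper. The paper notes $p^2\equiv 1\pmod 8$, deduces $p^k\equiv 1\pmod 8$ and hence $p^k+1\equiv 2\pmod 8$, and then substitutes $\nu_2(p^k+1)=1$ into Theorem~\ref{thm2.4}. Your odd-square argument modulo $4$ is the same idea, and working modulo $4$ is already enough to get $\nu_2(p^k+1)=1$.
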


\begin{proof}
If \(k\) is even and \(p\) is odd, then \(p^2\equiv 1\pmod 8\), so \(p^k\equiv 1\pmod 8\). Hence
\[
p^k+1\equiv 2\pmod 8,
\]
which implies \(\nu_2(p^k+1)=1\). The formula now follows from Theorem~\ref{thm2.4}.
\end{proof}

\begin{thm}\label{thm2.7}
Let
\[
n=2^a\prod_{i=1}^r p_i^{\alpha_i},
\]
where \(p_1,\dots,p_r\) are distinct odd primes. Then
\[
\nu_2(\sigma_k(n))
=
\sum_{\substack{1\le i\le r\\ \alpha_i\ \mathrm{odd}}}
\bigl(\nu_2(\alpha_i+1)+\nu_2(p_i^k+1)-1\bigr).
\]
In particular,
\[
\nu_2(\sigma_k(n))
=
\sum_{\substack{1\le i\le r\\ \alpha_i\ \mathrm{odd}}}
\bigl(\nu_2(\alpha_i+1)+\nu_2(p_i+1)-1\bigr)
\qquad \text{if }k\text{ is odd},
\]
and
\[
\nu_2(\sigma_k(n))
=
\sum_{\substack{1\le i\le r\\ \alpha_i\ \mathrm{odd}}}
\nu_2(\alpha_i+1)
\qquad \text{if }k\text{ is even}.
\]
\end{thm}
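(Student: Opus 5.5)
The plan is to assemble the global formula from the prime-power results via multiplicativity. First I write $n=2^a m$ with $m=\prod_{i=1}^r p_i^{\alpha_i}$ odd, and apply Lemma~\ref{lem2.2} to reduce to $\nu_2(\sigma_k(n))=\nu_2(\sigma_k(m))$. Next I use the multiplicativity of $\sigma_k$ established at the start of this section. Since the $p_i^{\alpha_i}$ are pairwise coprime, an easy induction on $r$ gives $\sigma_k(m)=\prod_{i=1}^r\sigma_k(p_i^{\alpha_i})$. Each factor is a positive integer, so it is nonzero and its valuation is defined, and the valuation is additive. Hence
\[
\nu_2(\sigma_k(n))=\sum_{i=1}^r \nu_2(\sigma_k(p_i^{\alpha_i})).
\]

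I then evaluate each summand with Theorem~\ref{thm2.4}. Terms with $\alpha_i$ even contribute $0$, which includes $\alpha_i=0$ if one allows it. Terms with $\alpha_i$ odd contribute $\nu_2(\alpha_i+1)+\nu_2(p_i^k+1)-1$. Restricting the sum to odd $\alpha_i$ gives exactly the displayed general formula.

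For the two special cases, I substitute the corollaries term by term. When $k$ is odd, Corollary~\ref{cor2.5} replaces $\nu_2(p_i^k+1)$ by $\nu_2(p_i+1)$. When $k$ is even, Corollary~\ref{cor2.6} gives $\nu_2(p_i^k+1)=1$, so each summand collapses to $\nu_2(\alpha_i+1)$.

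I expect no real obstacle here. The only points needing care are the edge case $r=0$, where $n$ is a power of $2$, the empty sum is $0$, and this agrees with Lemma~\ref{lem2.1}; and the observation that the general multiplicativity for several pairwise coprime factors follows from the two-factor statement by induction.
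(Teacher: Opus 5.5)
Your proposal is correct and follows essentially the same route as the paper: multiplicativity together with Lemma~\ref{lem2.1} (which you invoke through its immediate consequence, Lemma~\ref{lem2.2}), then Theorem~\ref{thm2.4} for each odd prime power, and Corollaries~\ref{cor2.5} and~\ref{cor2.6} for the two parity specializations. Your remarks on the empty sum when $r=0$ and on extending two-factor multiplicativity by induction are sound; the paper leaves both implicit.
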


\begin{proof}
By multiplicativity,
\[
\nu_2(\sigma_k(n))
=
\nu_2(\sigma_k(2^a))
+\sum_{i=1}^r \nu_2(\sigma_k(p_i^{\alpha_i})).
\]
Now apply Lemma~\ref{lem2.1} and Theorem~\ref{thm2.4}. The two specialized formulas follow from Corollaries~\ref{cor2.5} and \ref{cor2.6}.
\end{proof}

\section{Proof of the main theorem}

We first treat the case where \(k\) is odd.

\begin{prop}\label{prop3.1}
Assume that \(k\) is odd. Then for every \(n\ge 2\),
\[
\nu_2(\sigma_k(n))=\nu_2(\sigma(n)).
\]
\end{prop}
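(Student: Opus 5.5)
The plan is to compare the two explicit formulas supplied by Theorem~\ref{thm2.7}, applied once with the given odd $k$ and once with $k=1$. Write
\[
n=2^a\prod_{i=1}^r p_i^{\alpha_i}
\]
with the $p_i$ distinct odd primes, and note that $\sigma(n)=\sigma_1(n)$.

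For odd $k$, the specialized formula in Theorem~\ref{thm2.7} (resting on Corollary~\ref{cor2.5}) gives
\[
\nu_2(\sigma_k(n))=\sum_{\substack{1\le i\le r\\ \alpha_i\ \mathrm{odd}}}\bigl(\nu_2(\alpha_i+1)+\nu_2(p_i+1)-1\bigr).
\]
Since $k=1$ is itself odd, the same formula with $k=1$ computes $\nu_2(\sigma(n))$. The right-hand side does not depend on $k$ at all, so the two valuations coincide.

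Alternatively, one can argue termwise. By multiplicativity it suffices to show $\nu_2(\sigma_k(q))=\nu_2(\sigma(q))$ for each prime power $q\,\|\,n$. For $q=2^a$, both sides are $0$ by Lemma~\ref{lem2.1}. For $q=p^\alpha$ with $p$ odd, Theorem~\ref{thm2.4} expresses both valuations through $\nu_2(\alpha+1)$ and $\nu_2(p^k+1)$ (respectively $\nu_2(p+1)$). Corollary~\ref{cor2.5} gives $\nu_2(p^k+1)=\nu_2(p+1)$, so these agree.

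There is no real obstacle here. The only point needing care is confirming that the formula of Corollary~\ref{cor2.5} is valid for $k=1$, which is immediate because $1$ is odd.
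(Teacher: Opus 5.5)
Your proposal is correct and follows essentially the same route as the paper: multiplicativity, Lemma~\ref{lem2.1} for the $2$-power part, and Corollary~\ref{cor2.5} for the odd prime powers, giving a formula that does not depend on $k$. The only minor difference is that you get the $k=1$ case by applying Corollary~\ref{cor2.5} with $k=1$ itself. The paper instead quotes the $k=1$ formula from Amdeberhan--Moll--Sharma--Villamizar, so your version is self-contained.
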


\begin{proof}
By Corollary~\ref{cor2.5}, for each odd prime power \(p^\alpha\),
\[
\nu_2(\sigma_k(p^\alpha))
=
\begin{cases}
0, & \alpha \text{ even},\\[1mm]
\nu_2(\alpha+1)+\nu_2(p+1)-1, & \alpha \text{ odd}.
\end{cases}
\]
This is exactly the same formula as for \(k=1\); see \cite[Theorem 3.2]{[Amd21]}. Moreover, Lemma~\ref{lem2.1} gives
\[
\nu_2(\sigma_k(2^\alpha))=\nu_2(\sigma(2^\alpha))=0.
\]
The claim therefore follows from multiplicativity.
\end{proof}

As an immediate consequence of Proposition~\ref{prop3.1} and \cite[Theorem 1.3]{[Amd21]}, we obtain the following result.

\begin{thm}\label{thm3.2}
Assume that \(k\) is odd. Then for every \(n\ge 2\),
\[
\nu_2(\sigma_k(n))\le \lceil \log_2 n\rceil.
\]
Moreover, equality holds if and only if \(n\) is a product of distinct Mersenne primes.
\end{thm}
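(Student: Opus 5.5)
The plan is to reduce Theorem~\ref{thm3.2} to the case $k=1$ and then quote the known result. By Proposition~\ref{prop3.1}, for odd $k$ and every $n\ge 2$ we have $\nu_2(\sigma_k(n))=\nu_2(\sigma(n))$. By \cite[Theorem 1.3]{[Amd21]}, $\nu_2(\sigma(n))\le\lceil\log_2 n\rceil$ for every $n\ge 2$, with equality exactly when $n$ is a product of distinct Mersenne primes. Both the inequality and the equality characterization for $\sigma_k$ follow at once.

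To make the argument self-contained rather than relying on the citation, I would prove the $k=1$ bound directly from Theorem~\ref{thm2.7}. Write $n=2^a\prod p_i^{\alpha_i}$. For each $i$ with $\alpha_i$ odd, the contribution to the valuation is $c_i=\nu_2(\alpha_i+1)+\nu_2(p_i+1)-1$, and I would compare it with $\log_2(p_i^{\alpha_i})$.

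The first observation is that $2^{\nu_2(p+1)}\le p+1$, so $\nu_2(p+1)\le\log_2(p+1)$. The second is that $2^{\nu_2(\alpha+1)-1}\le(\alpha+1)/2\le\alpha$. Together these give
\[
2^{c_i}\le (p_i+1)\cdot\frac{\alpha_i+1}{2}\cdot\frac12 .
\]
I then need the right side to be at most $p_i^{\alpha_i}$, and strictly less unless $\alpha_i=1$ and $p_i+1$ is a power of $2$. For $\alpha_i=1$ the right side is $(p_i+1)/2\le p_i$, so $c_i\le\log_2(p_i+1)$, with $c_i=\log_2(p_i+1)$ exactly when $p_i$ is Mersenne. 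For $\alpha_i\ge 3$, $(p+1)(\alpha+1)/4<p^\alpha$ holds easily.

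Summing over $i$ gives $\nu_2(\sigma(n))\le\sum_i\log_2(p_i^{\alpha_i}+1)$ over the primes with odd exponent. The main obstacle is converting this into $\lceil\log_2 n\rceil$. The bound $\sum_i\log_2(p_i+1)=\log_2\prod(p_i+1)$ can exceed $\log_2 n$, so I instead use $\prod(p_i+1)\le 2^{r-1}\prod p_i+\dots$. More carefully, when all primes are Mersenne, $\nu_2=\sum\log_2(p_i+1)$. Since $n=\prod p_i<\prod(p_i+1)=2^{\nu_2}$ and $\prod(p_i+1)\le 2\prod p_i$ fails in general, I would use $\prod(1+1/p_i)<2$ for distinct Mersenne primes. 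This holds because $\sum 1/p_i\le 1/3+1/7+1/31+\dots$ and $(4/3)(8/7)(32/31)\cdots<2$. That gives $n<2^{\nu_2}<2n$, hence $\nu_2=\lceil\log_2 n\rceil$.

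In the non-Mersenne or $\alpha\ge3$ cases, each factor loses at least a factor of $2$ relative to $p_i^{\alpha_i}+1$, so the strict inequality $2^{\nu_2}\le n$ holds and the bound is not attained. The extra factor $2^a$ and the primes with even exponent only increase $n$, so they give strict inequality whenever present, except that $n$ cannot be a pure power of $2$ with equality, since then $\nu_2=0<\lceil\log_2 n\rceil$.
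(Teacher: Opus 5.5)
Your first paragraph is exactly the paper's proof. Proposition~\ref{prop3.1} gives $\nu_2(\sigma_k(n))=\nu_2(\sigma(n))$ for odd $k$, and \cite[Theorem 1.3]{[Amd21]} then supplies both the bound and the equality case, so that argument is complete on its own. The self-contained derivation you append, however, does not work as written; either repair it or drop it.

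It has two concrete problems. First, your per-prime bound is off by a factor of $2$. Since $c_i=\bigl(\nu_2(\alpha_i+1)-1\bigr)+\nu_2(p_i+1)$, you only get $2^{c_i}\le (p_i+1)(\alpha_i+1)/2$, not $(p_i+1)(\alpha_i+1)/4$; for $p_i=3$, $\alpha_i=1$ one has $2^{c_i}=4>2$. Your $\alpha_i=1$ discussion is also inconsistent: a right-hand side of $(p_i+1)/2$ would contradict $c_i=\log_2(p_i+1)$ for Mersenne $p_i$. For $\alpha_i=1$, simply note that $c_i=\nu_2(p_i+1)$.

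Second, and more seriously, the non-equality step does not follow from what you state. Let $M$ be the set of $i$ with $\alpha_i=1$ and $p_i$ Mersenne, and $O$ the remaining $i$ with $\alpha_i$ odd. Then $2^{\nu_2(\sigma(n))}=\prod_M(p_i+1)\prod_O 2^{c_i}$, and the Mersenne part is only bounded by $\rho\prod_M p_i$ with $\rho=\prod_M(1+1/p_i)<2$. A loss of a factor $2$ ``relative to $p_i^{\alpha_i}+1$'' then yields only $2^{\nu_2}<\prod_M p_i\prod_O(p_i^{\alpha_i}+1)$, which does not give $2^{\nu_2}\le n$.

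What you need is $2^{c_i}\le p_i^{\alpha_i}/2$ for $i\in O$, and this is true. If $\alpha_i=1$ and $p_i$ is not Mersenne, then $p_i+1=2^{c_i}u$ with $u\ge 3$ odd, so $2^{c_i}\le (p_i+1)/3<p_i/2$. If $\alpha_i\ge 3$, then $2^{c_i}\le (p_i+1)(\alpha_i+1)/2\le p_i^{\alpha_i}/2$. With this bound the cases close as follows:
\begin{itemize}
\item If $O\neq\emptyset$, then $2^{\nu_2}<n$, giving strict inequality.
\item If $O=\emptyset\neq M$, then $\prod_M p_i<2^{\nu_2}<2\prod_M p_i$, so $\nu_2=\lceil\log_2\prod_M p_i\rceil$. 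This equals $\lceil\log_2 n\rceil$ exactly when $n=\prod_M p_i$, since otherwise $n\ge 2\prod_M p_i$.
\item If $M=O=\emptyset$, then $\nu_2=0$.
\end{itemize}
Finally, $\rho<2$ needs a line of proof. For example, $\rho\le\exp\bigl(\sum_{m\ge 2}1/(2^m-1)\bigr)$, and $\sum_{m\ge 2}1/(2^m-1)\le 1/3+1/7+2/15=64/105<\ln 2$.
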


We now turn to the case where \(k\) is even.

\begin{lem}\label{lem3.3}
Assume that \(k\) is even, and let
\[
n=\prod_{i=1}^r p_i^{\alpha_i}
\]
be odd. Then
\[
\nu_2(\sigma_k(n))\le \sum_{i=1}^r \alpha_i.
\]
\end{lem}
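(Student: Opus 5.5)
By the even-$k$ formula in Theorem~\ref{thm2.7} (equivalently, Corollary~\ref{cor2.6} applied prime by prime), we have
\[
\nu_2(\sigma_k(n))=\sum_{\substack{1\le i\le r\\ \alpha_i\ \mathrm{odd}}}\nu_2(\alpha_i+1).
\]
The plan is to bound each summand by its exponent, $\nu_2(\alpha_i+1)\le \alpha_i$, and then add the contributions. When $\alpha_i$ is even the summand is $0\le\alpha_i$. When $\alpha_i$ is odd we use $2^{\nu_2(\alpha_i+1)}\le \alpha_i+1$, which gives $\nu_2(\alpha_i+1)\le \log_2(\alpha_i+1)$. It then suffices to check $\log_2(\alpha+1)\le \alpha$ for every integer $\alpha\ge 1$, i.e.\ $\alpha+1\le 2^\alpha$. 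This follows by induction: it holds at $\alpha=1$, and $2^{\alpha+1}=2\cdot 2^\alpha\ge 2(\alpha+1)\ge \alpha+2$.

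Summing these bounds over $i$ gives
\[
\nu_2(\sigma_k(n))\le \sum_{\alpha_i\ \mathrm{odd}}\alpha_i\le\sum_{i=1}^r\alpha_i,
\]
which is the claim.

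No step here is a real obstacle; the only thing to do is make the per-prime inequality $\nu_2(\alpha+1)\le\alpha$ explicit. It is also worth recording when this bound is tight, because that is what the equality case of Theorem~\ref{thm1.1}(b) will need. The inequality $\alpha+1\le 2^\alpha$ is an equality only at $\alpha=1$, and $\nu_2(\alpha+1)=\log_2(\alpha+1)$ only when $\alpha+1$ is a power of $2$. So $\nu_2(\alpha_i+1)=\alpha_i$ forces $\alpha_i=1$. Hence equality in the lemma holds exactly when every $\alpha_i$ equals $1$, that is, when $n$ is squarefree.
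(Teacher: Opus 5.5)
Your proof is correct and matches the paper's argument: you apply the even-$k$ formula from Theorem~\ref{thm2.7}, bound each odd-exponent term by $\nu_2(\alpha_i+1)\le\log_2(\alpha_i+1)\le\alpha_i$, and sum, with your induction spelling out the step $\alpha+1\le 2^\alpha$ that the paper leaves implicit. Your extra remark that equality holds exactly when $n$ is squarefree is also correct, though the paper's equality argument in Theorem~\ref{thm3.5} does not use it and instead relies on the strict bound of Lemma~\ref{lem3.4}.
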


\begin{proof}
By Theorem~\ref{thm2.7},
\[
\nu_2(\sigma_k(n))
=
\sum_{\alpha_i\ \mathrm{odd}} \nu_2(\alpha_i+1).
\]
If \(\alpha_i\) is odd, then
\[
2^{\nu_2(\alpha_i+1)}\le \alpha_i+1,
\]
and therefore
\[
\nu_2(\alpha_i+1)\le \log_2(\alpha_i+1)\le \alpha_i.
\]
It follows that
\[
\nu_2(\sigma_k(n))
\le
\sum_{\alpha_i\ \mathrm{odd}} \alpha_i
\le
\sum_{i=1}^r \alpha_i.
\]
\end{proof}

The next estimate will be used to isolate the equality case.

\begin{lem}\label{lem3.4}
Let
\[
n=\prod_{i=1}^r p_i^{\alpha_i}
\]
be an odd integer greater than \(3\), and put
\[
\Omega(n)=\sum_{i=1}^r \alpha_i.
\]
Then
\[
\lfloor \log_2 n\rfloor \ge \Omega(n)+1.
\]
\end{lem}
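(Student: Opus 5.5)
Since every odd prime satisfies $p_i\ge 3$, we have $n\ge 3^{\Omega(n)}$. Because $\lfloor \log_2 n\rfloor$ is an integer, the claim $\lfloor \log_2 n\rfloor\ge \Omega(n)+1$ is equivalent to $\log_2 n\ge \Omega(n)+1$, that is, to
\[
n\ge 2^{\Omega(n)+1}.
\]
Writing $\Omega=\Omega(n)$, the plan is to compare $3^{\Omega}$ with $2^{\Omega+1}$ for $\Omega\ge 2$, and to treat the case $\Omega=1$ separately.

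\textbf{Case $\Omega\ge 2$.} The inequality $3^{\Omega}\ge 2^{\Omega+1}$ is the same as $(3/2)^{\Omega}\ge 2$. This holds for $\Omega\ge 2$, since $(3/2)^2=9/4>2$ and $(3/2)^{\Omega}$ is increasing in $\Omega$. Therefore
\[
n\ge 3^{\Omega}\ge 2^{\Omega+1},
\]
and the claim follows.

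\textbf{Case $\Omega=1$.} Here $n$ is an odd prime. Since $n>3$, we get $n\ge 5\ge 4=2^{2}=2^{\Omega+1}$, as required.

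The case $\Omega=0$ cannot occur, because $n>3$ forces $n>1$. The only delicate point in the whole argument is the case $\Omega=1$. There the hypothesis $n>3$ is genuinely needed, since $n=3$ gives $\lfloor\log_2 3\rfloor=1<2$. This is exactly why the equality case $n=3$ is isolated in Theorem~\ref{thm1.1}(b).
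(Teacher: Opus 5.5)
Your proof is correct and follows essentially the same route as the paper: the same split into $\Omega(n)=1$ (where $n\ge 5$) and $\Omega(n)\ge 2$ (where $n\ge 3^{\Omega(n)}$). The only cosmetic difference is that you compare $3^{\Omega}$ with $2^{\Omega+1}$ via $(3/2)^{\Omega}\ge 9/4>2$, whereas the paper takes logarithms and uses $\log_2 3>3/2$.
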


\begin{proof}
We distinguish two cases.

\smallskip
\noindent
\emph{Case 1: \(\Omega(n)=1\).}
Then \(n=p\) is an odd prime. Since \(n>3\), we have \(p\ge 5\), and hence
\[
\lfloor \log_2 n\rfloor \ge \lfloor \log_2 5\rfloor =2=\Omega(n)+1.
\]

\smallskip
\noindent
\emph{Case 2: \(\Omega(n)\ge 2\).}
Since every odd prime factor of \(n\) is at least \(3\), we have
\[
n\ge 3^{\Omega(n)}.
\]
Thus
\[
\log_2 n\ge \Omega(n)\log_2 3.
\]
Since \(\log_2 3>3/2\), it follows that
\[
\log_2 n>\frac32\,\Omega(n).
\]
Because \(\Omega(n)\ge 2\), we have
\[
\frac32\,\Omega(n)\ge \Omega(n)+1.
\]
Therefore
\[
\log_2 n>\Omega(n)+1.
\]
As \(\Omega(n)+1\) is an integer, this implies
\[
\lfloor \log_2 n\rfloor \ge \Omega(n)+1.
\]
\end{proof}

We are now ready to prove the even-\(k\) case of Theorem~\ref{thm1.1}.

\begin{thm}\label{thm3.5}
Assume that \(k\) is even. Then for every \(n\ge 2\),
\[
\nu_2(\sigma_k(n))\le \lfloor \log_2 n\rfloor.
\]
Moreover, equality holds if and only if \(n=3\).
\end{thm}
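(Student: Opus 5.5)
The plan is to reduce to the odd part of \(n\) and then compare with \(\lfloor\log_2 n\rfloor\) using Lemmas~\ref{lem3.3} and \ref{lem3.4}. Write \(n=2^a m\) with \(m\) odd. By Lemma~\ref{lem2.2}, \(\nu_2(\sigma_k(n))=\nu_2(\sigma_k(m))\). Since \(m\le n\), we have \(\lfloor\log_2 m\rfloor\le\lfloor\log_2 n\rfloor\), so it suffices to bound \(\nu_2(\sigma_k(m))\) by \(\lfloor \log_2 m\rfloor\) and then track when every inequality in the chain is tight.

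I will split into three cases according to the size of \(m\).

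\emph{Case \(m=1\).} Here \(\sigma_k(m)=1\), so \(\nu_2(\sigma_k(n))=0\). Because \(n\ge2\), this gives \(0<1\le\lfloor\log_2 n\rfloor\), a strict inequality.

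\emph{Case \(m=3\).} By Corollary~\ref{cor2.6}, \(\nu_2(\sigma_k(3))=\nu_2(2)=1\). Also \(\lfloor\log_2 n\rfloor=\lfloor\log_2(3\cdot2^a)\rfloor=a+1\). Hence equality holds exactly when \(a=0\), that is, \(n=3\).

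\emph{Case \(m>3\).} Lemma~\ref{lem3.3} gives \(\nu_2(\sigma_k(m))\le\Omega(m)\), and Lemma~\ref{lem3.4} gives \(\Omega(m)+1\le\lfloor\log_2 m\rfloor\). Combining these,
\[
\nu_2(\sigma_k(n))\le \lfloor\log_2 m\rfloor-1<\lfloor\log_2 n\rfloor,
\]
so the inequality is strict.

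Together the three cases give both the bound and the characterization: equality holds if and only if \(n=3\). No step is a real obstacle, since the work is already done in Lemmas~\ref{lem3.3} and \ref{lem3.4}. The only point needing care is the equality analysis, in particular handling the power of \(2\) separately: when \(m=3\), the factor \(2^a\) raises \(\lfloor\log_2 n\rfloor\) without changing the valuation.
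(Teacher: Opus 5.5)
Your proof is correct and uses the same ingredients as the paper: the reduction to the odd part via Lemma~\ref{lem2.2}, Lemmas~\ref{lem3.3} and \ref{lem3.4}, and the direct computation for $3$. The only difference is organizational. You apply Lemma~\ref{lem3.4} to the odd part $m$ and compute $\lfloor\log_2(3\cdot 2^a)\rfloor=a+1$ explicitly, so you get the bound and the strictness in one pass; this makes the paper's preliminary bound via $\log_2 m\ge\Omega(m)$ and its separate argument that equality forces $a=0$ unnecessary.
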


\begin{proof}
Write
\[
n=2^a m,
\]
where \(m\) is odd. By Lemma~\ref{lem2.2},
\[
\nu_2(\sigma_k(n))=\nu_2(\sigma_k(m)).
\]

We first prove the upper bound. If \(m=1\), then \(n=2^a\), and Lemma~\ref{lem2.1} gives \(\nu_2(\sigma_k(n))=0\), so the assertion is clear. Thus we may assume that \(m>1\), and write
\[
m=\prod_{i=1}^r p_i^{\alpha_i}.
\]
By Lemma~\ref{lem3.3},
\[
\nu_2(\sigma_k(n))
=
\nu_2(\sigma_k(m))
\le
\sum_{i=1}^r \alpha_i.
\]
On the other hand,
\[
\log_2 n \ge \log_2 m = \sum_{i=1}^r \alpha_i \log_2 p_i \ge \sum_{i=1}^r \alpha_i,
\]
since each odd prime \(p_i\ge 3\) satisfies \(\log_2 p_i>1\). Hence
\[
\nu_2(\sigma_k(n))\le \log_2 n.
\]
Since the left-hand side is an integer, it follows that
\[
\nu_2(\sigma_k(n))\le \lfloor \log_2 n\rfloor.
\]

We now consider the equality case. Suppose that
\[
\nu_2(\sigma_k(n))=\lfloor \log_2 n\rfloor.
\]
We first show that \(a=0\), so that \(n\) must be odd. Indeed, if \(a\ge 1\), then
\[
\log_2 n=\log_2 m+a,
\]
and therefore
\[
\lfloor \log_2 n\rfloor \ge \lfloor \log_2 m\rfloor+1.
\]
But
\[
\nu_2(\sigma_k(n))=\nu_2(\sigma_k(m)),
\]
so equality for \(n\) would imply
\[
\nu_2(\sigma_k(m))\ge \lfloor \log_2 m\rfloor+1,
\]
contrary to the bound already established for \(m\). Hence \(a=0\).

Thus \(n\) is odd. If \(n>3\), then Lemma~\ref{lem3.4} yields
\[
\lfloor \log_2 n\rfloor \ge \Omega(n)+1,
\]
where \(\Omega(n)=\sum_i \alpha_i\), while Lemma~\ref{lem3.3} gives
\[
\nu_2(\sigma_k(n))\le \Omega(n).
\]
Therefore
\[
\nu_2(\sigma_k(n))<\lfloor \log_2 n\rfloor,
\]
so equality is impossible for odd \(n>3\). It remains only to consider \(n=3\).

For \(n=3\), we have
\[
\sigma_k(3)=1+3^k.
\]
Since \(k\) is even, \(3^k\equiv 1\pmod 8\), and hence
\[
1+3^k\equiv 2\pmod 8.
\]
Thus
\[
\nu_2(\sigma_k(3))=1=\lfloor \log_2 3\rfloor.
\]
This proves that equality holds if and only if \(n=3\).
\end{proof}

Theorem~\ref{thm1.1} follows immediately from Theorems~\ref{thm3.2} and \ref{thm3.5}.

\end{document}